\theoremstyle{definition}\newtheorem{definition}{Definition}
\newtheorem{proposition}[definition]{Proposition}
\newtheorem{lemma}[definition]{Lemma}
\newtheorem{theorem}[definition]{Theorem}
\newtheorem{corollary}[definition]{Corollary}
\newcommand{\GL}{\operatorname{GL}}
\newcommand{\M}{\operatorname{M}}
\newcommand{\C}{\mathbb{C}}
\newcommand{\embed}[1]{\prec_{#1}}
\newcommand{\cR}{\mathcal{R}}
\newcommand{\actson}{\curvearrowright}
\newcommand{\SL}{\operatorname{SL}}
\newcommand{\rL}{\operatorname{L}}
\newcommand{\N}{\mathbb{N}}
\newcommand{\T}{\mathbb{T}}
\newcommand{\Z}{\mathbb{Z}}
\newcommand{\cF}{\mathcal{F}}
\newcommand{\cV}{\mathcal{V}}
\newcommand{\id}{\mathord{\operatorname{id}}}
\newcommand{\recht}{\rightarrow}
\newcommand{\cU}{\mathcal{U}}
\newcommand{\vphi}{\varphi}
\newcommand{\eps}{\varepsilon}
\newcommand{\Tr}{\operatorname{Tr}}
\newcommand{\ovt}{\overline{\otimes}}
\newcommand{\ot}{\otimes}
\newcommand{\cG}{\mathcal{G}}
\newcommand{\m}{\mathord{\text{\rm m}}}
\newcommand{\D}{\operatorname{D}}
\newcommand{\PSL}{\operatorname{PSL}}
\newcommand{\rZ}{\operatorname{Z}}
\newcommand{\support}{\operatorname{supp}}
\newcommand{\Ran}{\operatorname{Ran}}
\newcommand{\Comm}{\operatorname{Comm}}
\newcommand{\HNN}{\operatorname{HNN}}
\renewcommand{\Re}{\operatorname{Re}}
\begin{document}

\begin{center}
{\boldmath\LARGE\bf A class of groups for which every action\vspace{0.5ex}\\  is W$^*$-superrigid}

\bigskip

{\sc by Cyril Houdayer\footnote{Partially supported by ANR grant AGORA NT09\textunderscore 461407}, Sorin Popa\footnote{Partially supported by NSF Grant
DMS-0601082} and Stefaan Vaes\footnote{Partially
    supported by ERC Starting Grant VNALG-200749, Research
    Programme G.0639.11 of the Research Foundation --
    Flanders (FWO) and K.U.Leuven BOF research grant OT/08/032.}}
\end{center}

\begin{abstract}\noindent
We prove the uniqueness of the group measure space Cartan subalgebra in crossed products $A \rtimes \Gamma$ covering certain cases where $\Gamma$ is an amalgamated free product over a non-amenable subgroup. In combination with Kida's work we deduce that if $\Sigma < \SL(3,\Z)$ denotes the subgroup of matrices $g$ with $g_{31} = g_{32}=0$, then any free ergodic probability measure preserving action of $\Gamma = \SL(3,\Z)*_\Sigma \SL(3,\Z)$ is stably W*-superrigid. In the second part we settle a technical issue about the unitary conjugacy of group measure space Cartan subalgebras.
\end{abstract}

\section{Introduction}

This short article is a two-fold complement to \cite{PV09}. The main result of \cite{PV09} provides a class $\cG$ of groups $\Gamma$ such that for every free ergodic
probability measure preserving (pmp) action $\Gamma \actson (X,\mu)$, the II$_1$ factor $\rL^\infty(X) \rtimes \Gamma$ has a unique group measure space Cartan subalgebra up to unitary conjugacy. The class $\cG$ contains all non-trivial amalgamated free products $\Gamma = \Gamma_1 *_\Sigma \Gamma_2$ such that $\Gamma$ admits a non-amenable subgroup with the relative property (T) and such that $\Sigma$ is an amenable group that is sufficiently non normal in $\Gamma$. In combination with known orbit equivalence superrigidity theorems, several group actions $\Gamma \actson (X,\mu)$ are shown in \cite{PV09} to be W*-superrigid: any isomorphism between $\rL^\infty(X)\rtimes \Gamma$ and an arbitrary group measure space II$_1$ factor $\rL^\infty(Y)\rtimes \Lambda$, comes from a conjugacy of the actions. For example the Bernoulli action $\Gamma \actson [0,1]^\Gamma$ is W*-superrigid for many of the groups $\Gamma \in \cG$, see \cite[Theorem 1.3]{PV09}. Using Kida's \cite[Theorem 1.4]{
 Ki09} and denoting by $\Sigma < \SL(3,\Z)$ the subgroup of upper triangular matrices, one deduces W*-superrigidity for every free ergodic pmp action $\Gamma \actson (X,\mu)$ such that all finite index subgroups of $\Sigma$ still act ergodically, see \cite[Theorem 6.2]{PV09}.

In the first part of this article we generalize the uniqueness of the group measure space Cartan subalgebra to the case where $\Gamma = \Gamma_1 *_\Sigma \Gamma_2$ is an amalgamated free product over a possibly non-amenable subgroup $\Sigma$, see Theorem \ref{thm.unique-Cartan}. We still assume some softness on $\Sigma$ by imposing the existence of a normal tower $\{e\} = \Sigma_0 \lhd \Sigma_1 \lhd \cdots \lhd \Sigma_{n-1} \lhd \Sigma_n = \Sigma$ such that all quotients $\Sigma_i/\Sigma_{i-1}$ have the Haagerup property. We have to strengthen however the rigidity assumption by imposing that $\Gamma$ admits an infinite subgroup that has property (T). The proof of Theorem \ref{thm.unique-Cartan} is identical to the proof of \cite[Theorem 5.2]{PV09}, apart from the fact that we need a new transfer of rigidity lemma, see Lemma \ref{lem.new-transfer} (cf.\ \cite[Lemma 3.1]{PV09}).

Using Kida's \cite[Theorem 9.11]{Ki09} it follows that if $\Sigma < \SL(3,\Z)$ denotes the subgroup of matrices $g$ with $g_{31} = g_{32}=0$, then any free ergodic pmp action of $\Gamma$ on $(X,\mu)$ is stably W*-superrigid, see Theorem \ref{thm.superrigidity}. Contrary to the case where $\Sigma$ consists of the upper triangular matrices, no ergodicity assumption has to be made on the action of the finite index subgroups of $\Sigma$.

In the second part of this article we provide a detailed argument for the following principle: if $B \subset A \rtimes \Gamma$ is a Cartan subalgebra in a group measure space II$_1$ factor and if $B$ embeds into $A \rtimes \Sigma$ for a sufficiently non normal subgroup $\Sigma < \Gamma$, then $B$ and $A$ are unitarily conjugate. So Proposition \ref{prop.final} provides a justification for the end of the proofs of \cite[Theorems 5.2 and 1.4]{PV09}, which were rather brief compared to the rest of that article. We are very grateful to Steven Deprez who pointed out to us the necessity of adding more details.

\section{Preliminaries}

\subsection*{Intertwining-by-bimodules}

Let $(M,\tau)$ be a von Neumann algebra with a faithful normal tracial state $\tau$ and let $A,B \subset M$ be (possibly non-unital) von Neumann subalgebras. In \cite[Section 2]{Po03} the technique of intertwining-by-bimodules was introduced. It is shown there that the following two conditions are equivalent.
\begin{itemize}
\item There exist projections $p \in A$, $q \in B$, a non-zero partial isometry $v \in p M q$ and a normal $*$-homomorphism $\theta : pAp \recht qBq$ satisfying $x v = v \theta(x)$ for all $x \in pAp$.
\item There is no sequence of unitaries $(w_n)$ in $A$ such that $\|E_B(a w_n b)\|_2 \recht 0$ for all $a, b \in M$.
\end{itemize}
If one, and hence both, of these conditions hold, we write $A \prec_M B$. By \cite[Theorem A.1]{Po01}, if $A$ and $B$ are Cartan subalgebras of a II$_1$ factor $M$, then $A \prec_M B$ if and only if $A$ and $B$ are unitarily conjugate.

\subsection*{Property (T) for von Neumann algebras}

Let $(P,\tau)$ be a von Neumann algebra with a faithful normal tracial state $\tau$. A normal completely positive map $\vphi : P \recht P$ is said to be subunital if $\vphi(1) \leq 1$ and subtracial if $\tau \circ \vphi \leq \tau$. Following \cite{CJ85}, we say that $P$ has property (T) if every sequence of normal subunital subtracial completely positive maps $\vphi_n : P \recht P$ converging to the identity pointwise in $\|\,\cdot\,\|_2$, converges uniformly in $\|\,\cdot\,\|_2$ on the unit ball of $P$.

If $\Gamma$ is a countable group, then $\rL \Gamma$ has property (T) if and only if the group $\Gamma$ has property (T) in the usual sense.

\subsection*{Relative property (H) and property anti-(T)}

In \cite[Section 2]{Po01} property (H) of a finite von Neumann algebra $M$ relative to a von Neumann subalgebra $P \subset M$ is introduced. We recall from \cite{Po01} the following facts.
\begin{itemize}
\item If $\Gamma \actson P$ is a trace preserving action, then $P \rtimes \Gamma$ has property (H) relative to $P$ if and only if the group $\Gamma$ has the Haagerup property. Recall that a countable group $\Gamma$ has the Haagerup property if and only if there exists a sequence of positive definite functions $\vphi_n : \Gamma \recht \C$ tending to $1$ pointwise and such that for every $n$ the function $\vphi_n$ belongs to $c_0(\Gamma)$ (see e.g.\ \cite{CCJJV}).
\item If $M$ has property (H) relative to $P \subset M$, there exists a sequence of normal subunital subtracial completely positive $P$-bimodular maps $\vphi_n : M \recht M$ such that $\|\vphi_n(x) - x \|_2 \recht 0$ for every $x \in M$ and such that every $\vphi_n$ satisfies the following relative compactness property: if $(w_k)$ is a sequence of unitaries in $M$ satisfying $\|E_P(aw_k b)\|_2 \recht 0$ for all $a,b \in M$, then $\|\vphi_n(w_k)\|_2 \recht 0$ when $k \recht \infty$. The converse is almost true, but we have no need to go into these technical details.
\item If $M$ has property (H) relative to $P \subset M$, then $N \ovt M$ has property (H) relative to $N \ovt P$ for every finite von Neumann algebra $N$.
\end{itemize}

The following lemma is essentially contained in \cite[Theorem 6.2]{Po01}. We provide a full proof for the convenience of the reader.

\begin{lemma} \label{lem.technical}
Let $(M,\tau)$ be a tracial von Neumann algebra and $P_1 \subset P \subset M$ von Neumann subalgebras. Assume that $P$ has property (H) relative to $P_1$.

If $M_0 \subset pMp$ is a von Neumann subalgebra such that $M_0$ has property (T) and $M_0 \prec_M P$, then $M_0 \prec_M P_1$.
\end{lemma}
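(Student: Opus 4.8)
The plan is to argue by contradiction, combining the relative property (H) of $P$ over $P_1$ with property (T) of $M_0$. So I assume $M_0 \prec_M P$ but $M_0 \notembed{M} P_1$. Relative property (H) furnishes normal subunital subtracial $P_1$-bimodular completely positive maps $\vphi_n \colon P \recht P$ with $\|\vphi_n(x) - x\|_2 \recht 0$ for all $x \in P$ and satisfying the relative compactness property over $P_1$. From $M_0 \prec_M P$ I fix intertwining data as in the first of the two conditions defining $\prec_M$: projections $e \in M_0$, $f \in P$, a non-zero partial isometry $v \in eMf$ and a $*$-homomorphism $\theta \colon eM_0e \recht fPf$ with $xv = v\theta(x)$ for all $x \in eM_0e$. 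Writing $q = v^*v$ and $p_0 = vv^*$, the intertwining relation gives $q = \theta(e) \in fPf$, $p_0 \in (eM_0e)' \cap eMe$, together with the identities $v^*xv = \theta(x)$ and $v\theta(x)v^* = xp_0$ for $x \in eM_0e$.

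Next I transfer the deformation $(\vphi_n)$ to the compression $R := p_0(eM_0e)p_0$, a von Neumann algebra with unit $p_0$. Since $p_0$ commutes with $eM_0e$, $R$ is a compression of the corner $eM_0e$ of $M_0$; as property (T) is stable under passing to corners and central reductions, $R$ has property (T). Define $\Psi_n \colon R \recht R$ by $\Psi_n(r) = E_R\bigl(v\,\vphi_n(v^*rv)\,v^*\bigr)$, where $E_R$ is the trace preserving conditional expectation onto $R$. For $r \in R$ one has $v^*rv \in qPq \subseteq P$, so each $\Psi_n$ is a composition of completely positive maps; using $\vphi_n(1) \leq 1$, $\tau \circ \vphi_n \leq \tau$ and $vq = v$ one checks that $\Psi_n$ is subunital and subtracial on $R$. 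Moreover $v\,\vphi_n(v^*rv)\,v^* \recht vv^* r vv^* = r$ in $\|\cdot\|_2$, because $\vphi_n \recht \id$ pointwise on $P$, so $\Psi_n \recht \id_R$ pointwise. By property (T) of $R$ the $\Psi_n$ therefore converge to $\id_R$ uniformly on the unit ball of $R$; fix $\eps_n \recht 0$ with $\|\Psi_n(r) - r\|_2 \leq \eps_n$ for all $r$ in the unit ball of $R$.

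Now I exploit $M_0 \notembed{M} P_1$. Since any intertwiner out of the corner $eM_0e$ is also one out of $M_0$ (its projection lies in $M_0$), we have $eM_0e \notembed{M} P_1$, so there is a sequence of unitaries $u_k \in \cU(eM_0e)$ with $\|E_{P_1}(au_kb)\|_2 \recht 0$ for all $a,b \in M$. Put $r_k := u_k p_0 \in \cU(R)$ and $w_k := \theta(u_k) = v^*u_kv$, a unitary in $qPq$. For $a,b \in P$ we have $E_{P_1}(aw_kb) = E_{P_1}\bigl((av^*)\,u_k\,(vb)\bigr)$, whence $\|E_{P_1}(aw_kb)\|_2 \recht 0$; the relative compactness property of $\vphi_n$, which applies to uniformly bounded sequences, then gives $\|\vphi_n(w_k)\|_2 \recht 0$ as $k \recht \infty$, for each fixed $n$. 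Since $v^*r_kv = w_k$, $\|v\| \leq 1$ and $E_R$ is a $\|\cdot\|_2$-contraction, it follows that $\|\Psi_n(r_k)\|_2 \leq \|\vphi_n(w_k)\|_2 \recht 0$. On the other hand $\|\Psi_n(r_k)\|_2 \geq \|r_k\|_2 - \eps_n = \sqrt{\tau(p_0)} - \eps_n$, which is bounded below by a strictly positive constant for all large $n$, uniformly in $k$. This contradicts $\lim_k \|\Psi_n(r_k)\|_2 = 0$, and hence $M_0 \prec_M P_1$.

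I expect the main obstacle to be the passage to the identity: the maps transferred from $P$ converge not to the identity of $M_0$ but to compression by the range projection $p_0 = vv^*$ of the intertwining isometry, so the property (T) argument must be run on the compression $R = p_0(eM_0e)p_0$ rather than on $M_0$ itself. This forces the two auxiliary points above — that property (T) descends to $R$, and that the relative compactness property may be applied to the bounded (rather than literally unitary) sequence $(w_k)$ — which are exactly where care is needed.
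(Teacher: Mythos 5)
Your proof is correct, but it takes a genuinely different route from the paper's at the decisive step. The paper invokes the strengthened intertwining result \cite[Remark 3.8]{Va07}: from $M_0 \prec_M P$ and $M_0 \not\prec_M P_1$ it produces an intertwining $\theta : p_0 M_0 p_0 \recht qPq$ whose \emph{image} already fails to embed, i.e.\ $\theta(p_0M_0p_0) \not\prec_P P_1$. The mixing unitaries $w_k$ are then chosen inside $\theta(p_0M_0p_0)$ itself, which has property (T), and the contradiction is the one-line estimate $\|q\|_2 = \|w_k\|_2 \leq \|\vphi_n(w_k)\|_2 + \|\vphi_n(w_k)-w_k\|_2 < \|q\|_2$. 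You avoid that refinement entirely: you use only the basic intertwining characterization, take the mixing unitaries $u_k$ upstairs in $eM_0e$ (exploiting that $\|E_{P_1}(xu_ky)\|_2 \recht 0$ for \emph{all} $x,y \in M$, so that conjugation by the partial isometry is allowed), push them into the corner $qPq$ via $w_k = v^*u_kv$, and then compensate by pulling the deformation back to $R = p_0(eM_0e)p_0$ through $\Psi_n = E_R\bigl(v\,\vphi_n(v^*\,\cdot\,v)\,v^*\bigr)$, where property (T) yields the uniform estimate. What the paper's route buys is brevity and the fact that both the rigid algebra and the mixing unitaries live inside $P$; what yours buys is independence from \cite[Remark 3.8]{Va07}, at the price of the extra bookkeeping you carry out (subunitality, subtraciality and pointwise convergence of $\Psi_n$, and property (T) of the compression $R$). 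Note also that the two issues you flag at the end are not extra debts of your argument relative to the paper's: the paper likewise applies the relative compactness property to unitaries of a corner of $P$ (its $w_k$ are unitaries of $\theta(p_0M_0p_0) \subset qPq$, not of $P$), and it likewise needs property (T) to pass to corners and to images under normal unital $*$-homomorphisms in order to assert that $\theta(p_0M_0p_0)$ has (T); both facts are standard and implicit in \cite{Po01}. One small correction: the plain intertwining relation only gives $q \leq \theta(e)$ and $v^*xv = q\theta(x)$, not $q = \theta(e)$ and $v^*xv = \theta(x)$; this is repaired by replacing $\theta$ with $q\theta(\cdot)$, which is legitimate since $q$ commutes with the image of $\theta$, and none of your subsequent computations are affected.
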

\begin{proof}
Assume that $M_0 \not\prec_M P_1$. Since $M_0 \prec_M P$, by \cite[Remark 3.8]{Va07} we find non-zero projections $p_0 \in M_0$, $q \in P$, a non-zero partial isometry $v \in p_0 M q$ and a normal unital $*$-homomorphism $\theta : p_0 M_0 p_0 \recht q P q$ such that $x v = v \theta(x)$ for all $x \in p_0 M_0 p_0$ and such that $\theta(p_0 M_0 p_0) \not\prec_P P_1$.

Since $P$ has property (H) relative to $P_1$, we can take a sequence $\vphi_n : P \recht P$ of normal subunital subtracial completely positive maps such that $\|\vphi_n(x) - x \|_2 \recht 0$ for all $x \in P$ and such that every $\vphi_n$ satisfies the relative compactness property explained above. Since $\theta(p_0 M_0 p_0)$ has property (T), take $n$ such that
$\|\vphi_n(w) - w \|_2 \leq \|q\|_2 / 2$ for all unitaries $w \in \theta(p_0 M_0 p_0)$. Since $\theta(p_0 M_0 p_0) \not\prec_P P_1$, take a sequence of unitaries $w_k \in \theta(p_0 M_0 p_0)$ such that $\|E_{P_1}(a w_k b)\|_2 \recht 0$ for all $a,b \in P$. By the relative compactness of $\vphi_n$, it follows that $\|\vphi_n(w_k)\|_2 \recht 0$ when $k \recht \infty$. So, for $k$ large enough, we have $\|\vphi_n(w_k)\|_2 < \|q\|_2 / 2$. It follows that
$$\|q\|_2 = \|w_k\|_2 \leq \|\vphi_n(w_k)\|_2 + \|\vphi_n(w_k) - w_k \|_2 < \|q\|_2 \; ,$$
which is absurd.
\end{proof}

We say that a finite von Neumann algebra $P$  is \emph{anti-(T)} if there exist von Neumann subalgebras $\C1 = P_0 \subset P_1 \subset \cdots \subset P_n = P$ such that for all $i=1,\ldots,n$, the von Neumann algebra $P_i$ has property (H) relative to $P_{i-1}$. Repeatedly applying Lemma \ref{lem.technical}, an anti-(T) von Neumann algebra cannot contain a diffuse von Neumann subalgebra with property (T).

We say that a countable group $\Sigma$ is anti-(T) if there exist subgroups $\{e\} = \Sigma_0 < \Sigma_1 < \cdots < \Sigma_n = \Sigma$ such that for all $i=1,\ldots,n$, $\Sigma_{i-1}$ is normal in $\Sigma_i$ and $\Sigma_i/\Sigma_{i-1}$ has the Haagerup property. If $\Sigma$ is anti-(T), then the group von Neumann algebra $\rL \Sigma$ is anti-(T) as well. An anti-(T) group cannot contain an infinite subgroup with property (T). Nevertheless, $\SL(2,\Z) \ltimes \Z^2$ is an anti-(T) group (since $\SL(2,\Z)$ has the Haagerup property and $\Z^2$ is amenable) which contains an infinite subgroup with the \emph{relative} property (T), namely $\Z^2$. This explains why our new transfer of rigidity lemma (see Lemma \ref{lem.new-transfer}) requires property (T) rather than relative property (T).

\section{Transfer of rigidity and W*-superrigidity}

We say that free ergodic pmp actions $\Gamma \actson (X,\mu)$ and $\Lambda \actson (Y,\eta)$ are
\begin{itemize}
\item W*-equivalent, if $\rL^\infty(X) \rtimes \Gamma \cong \rL^\infty(Y) \rtimes \Lambda$,
\item orbit equivalent, if the orbit equivalence relations $\cR(\Gamma \actson X)$ and $\cR(\Lambda \actson Y)$ are isomorphic,
\item conjugate, if there exists an isomorphism of probability spaces $\Delta : X \recht Y$ and an isomorphism of groups $\delta : \Gamma \recht \Lambda$ such that $\Delta(g \cdot x) = \delta(g) \cdot \Delta(x)$ almost everywhere.
\end{itemize}

Following \cite[Definition 6.1]{PV09} a free ergodic pmp action $\Gamma \actson (X,\mu)$ is called \emph{W*-superrigid} if the following property holds. If $\Lambda \actson (Y,\eta)$ is an arbitrary free ergodic pmp action and $\pi : \rL^\infty(X) \rtimes \Gamma \recht \rL^\infty(Y) \rtimes \Lambda$ is a W*-equivalence, then the actions $\Gamma \actson X$ and $\Lambda \actson Y$ are conjugate through $\Delta : X \recht Y$, $\delta : \Gamma \recht \Lambda$ and up to unitary conjugacy $\pi$ is of the form
$$\pi(a u_g) = \Delta_*(a \omega_g) u_{\delta(g)} \quad\text{for all}\quad a \in \rL^\infty(X), g \in \Gamma,$$
where $(\omega_g) \in \rZ^1(\Gamma \actson X)$ is a $\T$-valued $1$-cocycle for the action $\Gamma \actson X$.

Slightly more natural than W*-superrigidity is the notion of \emph{stable W*-superrigidity} where possible finite index issues are correctly taken into account. A stable isomorphism between II$_1$ factors $M$ and $N$ is an isomorphism between $M$ and an amplification $N^t$. This leads to the notion of \emph{stable W*-equivalence} between free ergodic pmp actions. Similarly one defines \emph{stable orbit equivalence.} Finally, a \emph{stable conjugacy} between two free ergodic pmp actions $\Gamma \actson (X,\mu)$ and $\Lambda \actson (Y,\eta)$ is a conjugacy between the actions $\frac{\Gamma_0}{G} \actson \frac{X_0}{G}$ and $\frac{\Lambda_0}{H} \actson \frac{Y_0}{H}$ where $\Gamma \actson X$, $\Lambda \actson Y$ are induced\footnote{A free ergodic pmp action $\Gamma \actson (X,\mu)$ is said to be induced from $\Gamma_0 \actson X_0$ if $\Gamma_0 < \Gamma$ is a finite index subgroup and $X_0 \subset X$ is a non-negligible $\Gamma_0$-invariant subset such that $\mu(X_0 \cap g \cdot X_0) = 0$ for all $g \in \Gamma - \Gamma_0$.} from $\Gamma_0 \actson X_0$, $\Lambda_0 \actson Y_0$ and where $G \lhd \Gamma_0$, $H \lhd \Lambda_0$ are finite normal subgroups.

\begin{definition}[{\cite[Definition 6.4]{PV09}}] \label{def.stable-W-superrigid}
A free ergodic pmp action $\Gamma \actson (X,\mu)$ is said to be \emph{stably W$^*$-superrigid} if the following holds. Whenever $\pi$ is a stable W$^*$-equivalence between $\Gamma \actson (X,\mu)$ and an arbitrary free ergodic pmp action $\Lambda \actson (Y,\eta)$, the actions are stably conjugate and $\pi$ equals the composition of
\begin{itemize}
\item the canonical stable W$^*$-equivalence given by the stable conjugacy,
\item the automorphism of $\rL^\infty(X) \rtimes \Gamma$ given by an element of $\rZ^1(\Gamma \actson X)$,
\item an inner automorphism of $\rL^\infty(X) \rtimes \Gamma$.
\end{itemize}
\end{definition}

Let $\Gamma \actson (X,\mu)$ be stably W$^*$-superrigid. If moreover $\Gamma$ has no finite normal subgroups and if finite index subgroups of $\Gamma$ still act ergodically on $(X,\mu)$, then $\Gamma \actson (X,\mu)$ is W$^*$-superrigid in the sense explained above.

The following is the main result in this section.

\begin{theorem}\label{thm.superrigidity}
Denote by $\Sigma < \SL(3,\Z)$ the subgroup of matrices $g$ such that $g_{31} = g_{32} = 0$. Put $\Gamma = \SL(3,\Z) *_\Sigma \SL(3,\Z)$. Every free ergodic pmp action $\Gamma \actson (X,\mu)$ is stably W*-superrigid. In particular all a-periodic\footnote{A free ergodic pmp action is called a-periodic if it is not induced from a finite index subgroup, i.e.\ if finite index subgroups still act ergodically.} free ergodic pmp actions of $\Gamma$ are W*-superrigid.

More generally, if $k \geq 1$ and $n_1,\ldots,n_k \in \{1,2\}$, the same conclusion holds for the group $\Gamma = \PSL(n,\Z) *_\Sigma \PSL(n,\Z)$ where $n = 2 + n_1 + \cdots + n_k$ and $\Sigma$ is the image in $\PSL(n,\Z)$ of
$$\SL(n,\Z) \cap \begin{pmatrix} \GL(2,\Z) & * & \cdots & * \\ 0 & \GL(n_1,\Z) & \cdots & * \\ \vdots  & \vdots & \ddots & \vdots
\\ 0 & 0 & \cdots & \GL(n_k,\Z) \end{pmatrix}\; .$$
\end{theorem}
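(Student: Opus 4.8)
The plan is to obtain the statement as the combination of the Cartan-uniqueness Theorem~\ref{thm.unique-Cartan} with Kida's orbit equivalence superrigidity \cite[Theorem 9.11]{Ki09}, following the standard scheme by which a uniqueness-of-Cartan result and an OE-superrigidity result together yield W*-superrigidity. The substantive task is therefore to verify that the pair $(\Gamma,\Sigma)$ meets the hypotheses of Theorem~\ref{thm.unique-Cartan}: that $\Gamma=\Gamma_1*_\Sigma\Gamma_2$ is a nontrivial amalgamated free product, that $\Sigma$ is anti-(T) and sufficiently non normal in $\Gamma$, and that $\Gamma$ contains an infinite subgroup with property (T). It is worth stressing that here $\Sigma$ is \emph{non-amenable}, since it contains the copy of $\SL(2,\Z)$ sitting in its top-left $\GL(2,\Z)$ block; thus the original theorem of \cite{PV09}, which assumes $\Sigma$ amenable, does not apply, and the passage to anti-(T) amalgams carried out in Theorem~\ref{thm.unique-Cartan} is genuinely used.

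First I would check that $\Sigma$ is anti-(T). Its unipotent radical $U$ (the intersection of $\Sigma$ with the block-unitriangular subgroup) is a finitely generated nilpotent group --- abelian and isomorphic to $\Z^2$ in the $\SL(3,\Z)$ case --- hence amenable, so its lower central series gives a normal tower with abelian, in particular Haagerup, quotients. The quotient $\Sigma/U$ is a finite-index subgroup of $\GL(2,\Z)$ (respectively of $\GL(2,\Z)\times\GL(n_1,\Z)\times\cdots\times\GL(n_k,\Z)$), and since each $\GL(m,\Z)$ with $m\in\{1,2\}$ is finite or virtually free, hence Haagerup, and the Haagerup property is inherited by finite products and by subgroups, $\Sigma/U$ has the Haagerup property. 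Appending this quotient to the tower from $U$ exhibits $\Sigma$ as anti-(T); passing to the image in $\PSL(n,\Z)$ only divides by the finite center and changes nothing. Next, $\Gamma$ contains the vertex group $\SL(3,\Z)$ (respectively $\PSL(n,\Z)$ with $n=2+n_1+\cdots+n_k\geq 3$), which is infinite and has property (T), supplying the required Kazhdan subgroup. The remaining non-normality condition on $\Sigma$ is precisely the malnormality-type input underlying Kida's rigidity and the class of \cite{PV09}, and is checked as there from the fact that $\Sigma$ is a proper parabolic. With these verifications Theorem~\ref{thm.unique-Cartan} yields that $\rL^\infty(X)\rtimes\Gamma$ has a unique group measure space Cartan subalgebra up to unitary conjugacy.

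Finally I would run the standard deduction. Given a stable W*-equivalence $\pi$ between $\Gamma\actson(X,\mu)$ and an arbitrary free ergodic pmp action $\Lambda\actson(Y,\eta)$, both $\rL^\infty(X)$ and $\pi^{-1}(\rL^\infty(Y))$ are group measure space Cartan subalgebras of an amplification of $\rL^\infty(X)\rtimes\Gamma$; by the uniqueness just established and \cite[Theorem A.1]{Po01} they are unitarily conjugate, so after correcting $\pi$ by an inner automorphism it maps $\rL^\infty(X)$ onto $\rL^\infty(Y)$ and hence implements a stable orbit equivalence of the two actions. Kida's \cite[Theorem 9.11]{Ki09} --- which, unlike his Theorem 1.4, imposes no ergodicity requirement on the finite-index subgroups of $\Sigma$, and this is exactly why one uses this parabolic $\Sigma$ rather than the full upper triangular Borel --- then promotes this stable orbit equivalence to a stable conjugacy of the actions. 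Tracking the leftover $\T$-valued cocycle in $\rZ^1(\Gamma\actson X)$ assembles $\pi$ into the form required by Definition~\ref{def.stable-W-superrigid}, giving stable W*-superrigidity. Since $\Gamma$ is a nontrivial amalgam of ICC groups over a proper subgroup, it has no nontrivial finite normal subgroup, so the principle recorded after Definition~\ref{def.stable-W-superrigid} converts this into genuine W*-superrigidity for every a-periodic action.

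The step I expect to be the main obstacle is the precise matching of the non-normality hypothesis of Theorem~\ref{thm.unique-Cartan} with the geometric rigidity of \cite[Theorem 9.11]{Ki09}, together with the final bookkeeping that turns the stable orbit equivalence into the exact cocycle-twisted description of $\pi$; the genuinely new analytic difficulty is not here but is absorbed into Theorem~\ref{thm.unique-Cartan}, which in turn rests on the new transfer of rigidity Lemma~\ref{lem.new-transfer} and on Lemma~\ref{lem.technical}.
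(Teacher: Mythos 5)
Your proposal is correct and follows essentially the same route as the paper: verify that $\Sigma$ is anti-(T) via its amenable unipotent radical and Haagerup block-diagonal quotient, apply Theorem \ref{thm.unique-Cartan} to turn any stable W*-equivalence into a stable orbit equivalence, and invoke Kida's \cite[Theorem 9.11]{Ki09} (coupling rigidity with respect to $\Comm(\Gamma)$, with $\Gamma$ icc) to upgrade that to a stable conjugacy. The minor differences --- your superfluous lower central series step (amenability of the unipotent radical already gives the Haagerup property directly, as the paper uses), and your extra bookkeeping on finite normal subgroups for the a-periodic case --- do not change the argument.
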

\begin{proof}
The theorem is a direct consequence of Kida's \cite[Theorem 9.11]{Ki09} and the uniqueness of group measure space Cartan theorem \ref{thm.unique-Cartan} below. Let
$$P =  \begin{pmatrix} 1 & * & \cdots & * \\ 0 & 1 & \cdots & * \\ \vdots  & \vdots & \ddots & \vdots
\\ 0 & 0 & \cdots & 1 \end{pmatrix} \mbox{ and } G = \SL(n,\Z) \cap \begin{pmatrix} \GL(2,\Z) & 0 & \cdots & 0 \\ 0 & \GL(n_1,\Z) & \cdots & 0 \\ \vdots  & \vdots & \ddots & \vdots
\\ 0 & 0 & \cdots & \GL(n_k,\Z) \end{pmatrix}\; ,$$
and denote by $\overline P$ (resp.\ $\overline G$) the image of $P$ (resp.\ $G$) in $\PSL(n, \Z)$. We have that $\overline P$ is amenable and normal in $\Sigma$ and $\overline G \cong \Sigma/ \overline P$ has the Haagerup property. This shows that $\Sigma$ is anti-(T).
Therefore, if $\Gamma \actson (X,\mu)$ is an arbitrary free ergodic pmp action, Theorem \ref{thm.unique-Cartan} says that every stable W*-equivalence between $\Gamma \actson (X,\mu)$ and an arbitrary $\Lambda \actson (Y,\eta)$ comes from a stable orbit equivalence of the actions. Kida showed in \cite[Theorem 9.11]{Ki09} that $\Gamma$ is coupling rigid with respect to the abstract commensurator\footnote{Given a group $\Gamma$ the abstract commensurator $\Comm(\Gamma)$ is defined as the group of all isomorphisms $\delta : \Gamma_1 \recht \Gamma_2$ between finite index subgroups $\Gamma_1,\Gamma_2 < \Gamma$, identifying two such isomorphisms when they coincide on a finite index subgroup. Inner conjugacy provides a homomorphism from $\Gamma$ to $\Comm(\Gamma)$, which is injective if and only if $\Gamma$ is icc.} $\Comm(\Gamma)$. Since $\Gamma$ is icc and $\Comm(\Gamma)$ is countable, this precisely means that every stable orbit equivalence comes from a stable conjugacy, cf.\ \cite[Proposition 3.11]{Ki09}.
\end{proof}

The W*-superrigidity in Theorem \ref{thm.superrigidity} arises as the combination of Kida's OE superrigidity and the following uniqueness result for group measure space Cartan subalgebras. We first need a new transfer of rigidity lemma (cf.\ \cite[Lemma 3.1]{PV09}).

\begin{lemma}\label{lem.new-transfer}
Let $M$ be a {\rm II}$_1$ factor and $\vphi_n : M \recht M$ a sequence of normal subunital subtracial completely positive maps. Assume that $P,M_0 \subset M$ are von Neumann subalgebras such that $P$ is anti-(T) and such that $M_0$ is diffuse and has property (T).

Let $p \in M$ be a projection and $pMp = Q \rtimes \Lambda$ any crossed product decomposition with $Q$ being anti-(T). Denote by $(v_s)_{s \in \Lambda}$ the corresponding canonical unitaries.

For every $\eps > 0$, there exists $n$ and a sequence $(s_k)_{k \in \N}$ in $\Lambda$ such that
\begin{enumerate}
\item $\|\vphi_n(v_{s_k}) - v_{s_k}\|_2 \leq \eps$ for all $k \in \N$,
\item for all $a,b \in M$ we have $\|E_P(a v_{s_k} b)\|_2 \recht 0$ when $k \recht \infty$.
\end{enumerate}
\end{lemma}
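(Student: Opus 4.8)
The plan is to split the argument into a \emph{rigidity input} coming from $M_0$ and a \emph{softness input} coming from the anti-(T) algebras $P$ and $Q$, and then to transfer the former onto the canonical unitaries $(v_s)_{s\in\Lambda}$ of the second crossed product decomposition. First I would exploit property (T) of $M_0$: since the maps $\vphi_n$ converge to the identity pointwise in $\|\,\cdot\,\|_2$, property (T) of $M_0$ upgrades this to uniform convergence on the unit ball of $M_0$. Fixing $\eps>0$, this singles out one index $n$ with $\|\vphi_n(u)-u\|_2\le\eps$ for every unitary $u\in\cU(M_0)$, and this $n$ will be the index in the conclusion.

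Next I would extract the escaping unitaries from the softness of $P$. Because $P$ is anti-(T), there is a tower $\C 1=P_0\subset\cdots\subset P_\ell=P$ with each $P_i$ having property (H) relative to $P_{i-1}$. If $M_0\prec_M P$, then applying Lemma \ref{lem.technical} to the pair $P_{\ell-1}\subset P_\ell$ gives $M_0\prec_M P_{\ell-1}$, and iterating down the tower yields $M_0\prec_M\C 1$, contradicting that $M_0$ is diffuse. Hence $M_0\not\prec_M P$, which provides a sequence of unitaries $u_m\in\cU(M_0)$ with $\|E_P(a u_m b)\|_2\recht 0$ for all $a,b\in M$; by the previous paragraph each satisfies $\|\vphi_n(u_m)-u_m\|_2\le\eps$. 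The same iteration, carried out inside $pMp$ (where $M_0$ may be assumed to sit after cutting by $p$, so that the $u_m$ admit a $\Lambda$-Fourier expansion) with the anti-(T) tower of $Q$, shows $M_0\not\prec_{pMp}Q$, i.e.\ one may also arrange $\|E_Q(a u_m b)\|_2\recht 0$, so that the $\Lambda$-Fourier support of the $u_m$ spreads out.

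The heart of the proof is then the transfer to genuine group elements. Writing $u_m=\sum_{s\in\Lambda}E_Q(u_m v_s^*)\,v_s$ in $pMp=Q\rtimes\Lambda$, I would argue that the near-fixing $\|\vphi_n(u_m)-u_m\|_2\le\eps$ forces a nonnegligible portion of the Fourier mass of $u_m$ onto group elements $s$ for which $v_s$ is itself nearly fixed, while $\|E_P(au_mb)\|_2\recht0$ forces a cofinal part of this support to escape $P$; a pigeonhole argument over $m$ then produces the desired sequence $(s_k)$ satisfying both (1) and (2). Equivalently, one argues by contradiction: if for the chosen $n$ no $P$-escaping sequence lived in $S:=\{s:\|\vphi_n(v_s)-v_s\|_2\le\eps\}$, then $\overline{\lspan}\{v_s:s\in S\}$ would be captured by $P$ in the intertwining sense, and since the near-fixed $u_m$ are essentially supported on $S$ this would contradict $M_0\not\prec_M P$.

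The main obstacle I anticipate is precisely this last step: because the deformation maps $\vphi_n$ are in no way bimodular over $Q$ and do not preserve the $\Lambda$-grading of $pMp$, the inequality $\|\vphi_n(u_m)-u_m\|_2\le\eps$ does not split coefficientwise over the Fourier expansion. The delicate point is therefore to establish a quantitative estimate controlling $\sum_{s\notin S}\|E_Q(u_m v_s^*)\|_2^2$ by $\|\vphi_n(u_m)-u_m\|_2$ together with the spreading $\|E_Q(au_mb)\|_2\recht0$, so that the near-fixed $u_m$ may genuinely be replaced by near-fixed canonical unitaries without sacrificing the $P$-escape property. This is the precise analogue, in the property (T) setting, of the transfer estimate in \cite[Lemma 3.1]{PV09}.
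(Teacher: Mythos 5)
Your peripheral steps are fine: iterating Lemma \ref{lem.technical} down the anti-(T) towers does give $pM_0p \notembed{pMp} Q$ (and also $M_0 \notembed{M} P$), and property (T) does produce an index $n$ with uniform near-invariance on unitaries. But the step you call ``the heart of the proof'' is exactly the step you leave open, and your own diagnosis of why it is problematic is correct: since $\vphi_n$ is not $Q$-bimodular and does not respect the $\Lambda$-grading of $pMp$, the estimate $\|\vphi_n(u_m)-u_m\|_2 \le \eps$ gives no control at all on the individual Fourier coefficients $E_Q(u_m v_s^*)$, and no pigeonhole over $m$ can convert it into concentration of the mass of $u_m$ on your set $S=\{s : \|\vphi_n(v_s)-v_s\|_2\le\eps\}$. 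What you have written is therefore a correct reduction of the lemma to a transfer estimate that you do not prove; this is a genuine gap, not a presentational one.

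The missing idea in the paper is the comultiplication $\Delta : N \recht N \ovt N$, $\Delta(a v_s) = a v_s \ot v_s$ for $a \in Q$, $s \in \Lambda$, where $N = pMp = Q \rtimes \Lambda$. Property (T) is applied not to $\vphi_n$ acting on $M_0$, but to $\id \ot \vphi_n$ acting on $\Delta(pM_0p) \subset N \ovt M$. Writing $w = \sum_s w_s v_s \in \cU(pM_0p)$, one has $\Delta(w) = \sum_s w_s v_s \ot v_s$; because the second tensor legs are pure group unitaries and the first legs $w_s v_s$ are mutually orthogonal in $\rL^2(N)$, the invariance defect splits exactly coefficientwise:
$$1 - \Re\, (\tau\ot\tau)\bigl(\Delta(w)^*\,(\id\ot\vphi_n)\Delta(w)\bigr) \;=\; \sum_{s\in\Lambda} \|w_s\|_2^2\;\bigl(1-\Re\,\tau(v_s^*\vphi_n(v_s))\bigr)\;,$$
which is precisely the identity you observe to be unavailable for $\vphi_n(u_m)$ itself. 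This forces $\sum_{s\in\cV}\|w_s\|_2^2 \ge 1/2$ for every $w \in \cU(pM_0p)$, where $\cV$ is the set of near-invariant group elements. The escape condition must then also be formulated on the other side of $\Delta$: one proves $\Delta(pM_0p) \notembed{N\ovt M} N \ovt P$, and this follows from $pM_0p \notembed{N} Q$ (which via $\Delta$ gives $\Delta(pM_0p) \notembed{N\ovt N} N\ot 1$) combined with anti-(T) of $P$ and Lemma \ref{lem.technical} applied along the tower $N\ot 1 \subset \cdots \subset N \ovt P$ --- note that the paper never needs your statement $M_0 \notembed{M} P$. If no $P$-escaping sequence existed in $\cV$, the displayed concentration would give
$$\sum_{a,b\in\cF}\bigl\|E_{N \ovt P}\bigl((1\ot a)\Delta(w)(1\ot b)\bigr)\bigr\|_2^2 \;\ge\; \frac{\delta}{2} \quad\text{for all}\quad w \in \cU(pM_0p)\;,$$
i.e.\ $\Delta(pM_0p) \embed{N\ovt M} N \ovt P$, a contradiction. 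Without $\Delta$ (or an equivalent device), your outline cannot be completed.
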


\begin{proof}
Since $M$ is a II$_1$ factor and $M_0$ is diffuse, we may assume that $p \in M_0$. Write $N = Q \rtimes \Lambda$ and denote by $\Delta : N \recht N \ovt N$ the normal $*$-homomorphism given by $\Delta(a v_s) = a v_s \ot v_s$ for all $a \in Q$ and $s \in \Lambda$. Normalize the trace $\tau$ on $M$ such that $\tau(p) = 1$.

Since $Q$ is anti-(T), Lemma \ref{lem.technical} implies that $pM_0 p \not\prec_N Q$. Let $(w_n)$ be a sequence of unitaries in $pM_0 p$ such that $\|E_Q(a w_n b)\|_2 \recht 0$ for all $a,b \in N$. It follows that $\Delta(w_n)$ is a sequence of unitaries in $N \ovt N$ satisfying
$$\|(\id \ot \tau)(a \Delta(w_n) b)\|_2 \recht 0 \quad\text{for all}\quad a,b \in N \ovt N \; .$$
Indeed, it suffices to check the convergence for $a = 1 \otimes c v_s$ and $b = 1 \otimes d v_t$, where $c, d \in Q$ and $s, t \in \Lambda$. We have $\|(\id \ot \tau)((1 \otimes c v_s) \Delta(w_n) (1 \otimes d v_t))\|_2 = |\tau(\sigma_{t^{-1}}(d) c)| \|E_Q(w_n v_{ts})\|_2 \recht 0$. So, $\Delta(pM_0 p) \not\prec_{N \ovt N} N \ot 1$. Since $P$ is anti-(T), Lemma \ref{lem.technical} implies that $\Delta(pM_0 p) \not\prec_{N \ovt M} N \ovt P$.

Choose $\eps > 0$. Put $\eps_1 = \eps^2/4$. Since $pM_0 p$ has property (T), take $n$ such that
$$1-\Re (\tau \ot \tau)(\Delta(w)^* \, (\id \ot \vphi_n)\Delta(w)) \leq \eps_1$$
for all $w \in \cU(pM_0 p)$. Define
$$\cV := \{s \in \Lambda \mid 1-\Re \tau (v_s^* \vphi_n(v_s)) \leq 2 \eps_1 \} \; .$$
Note that for all $s \in \cV$, we have $\|\vphi_n(v_s) - v_s\|_2 \leq \sqrt{4 \eps_1} = \eps$. In order to prove the lemma, it suffices to show that there exists a sequence $s_k \in \cV$ such that $\|E_P(a v_{s_k} b)\|_2 \recht 0$ for all $a, b \in M$. Assume the contrary. We then find a finite subset $\cF \subset M$ and a $\delta > 0$ such that
$$\sum_{a,b \in \cF} \|E_P(a v_s b)\|_2^2 \geq \delta \quad\text{for all}\quad s \in \cV \; .$$
We will deduce that $\Delta(pM_0 p) \prec_{N \ovt M} N \ovt P$. This will be a contradiction with the statement $\Delta(pM_0 p) \not\prec_{N \ovt M} N \ovt P$ that we have shown at the beginning of the proof.

Let $w \in \cU(pM_0 p)$. Write $w = \sum_{s \in \Lambda} w_s v_s$ where $w_s \in Q$. Since $\tau(p) = 1$, we have $\sum_{s \in \Lambda} \|w_s\|_2^2 = 1$. Therefore,
\begin{align*}
\eps_1 & \geq 1-\Re (\tau \ot \tau)(\Delta(w)^* \, (\id \ot \vphi_n)\Delta(w)) \\
& = \sum_{s \in \Lambda} \|w_s\|_2^2 \; \bigl(1- \Re \tau (v_s^* \vphi_n(v_s)) \bigr) \\
& \geq \sum_{s \in \Lambda-\cV} \|w_s\|_2^2 \; \bigl(1- \Re \tau (v_s^* \vphi_n(v_s)) \bigr) \\
& \geq \sum_{s \in \Lambda-\cV} \|w_s\|_2^2 \; 2 \eps_1 \; .
\end{align*}
We conclude that for all $w \in \cU(pM_0 p)$ we have
$$\sum_{s \in \Lambda-\cV} \|w_s\|_2^2 \leq \frac{1}{2}$$
implying that
$$\sum_{s \in \cV} \|w_s\|_2^2 \geq \frac{1}{2}$$
for all $w \in \cU(pM_0 p)$.

It follows that for all $w \in \cU(pM_0 p)$
\begin{align*}
\sum_{a,b \in \cF}  \|E_{N \ovt P}( (1 \ot a) \Delta(w) (1 \ot b) )\|_2^2 & = \sum_{a,b \in \cF, s \in \Lambda} \|w_s\|_2^2 \; \|E_P(a v_s b)\|_2^2 \\
& \geq \sum_{a,b \in \cF, s \in \cV} \|w_s\|_2^2 \; \|E_P(a v_s b)\|_2^2 \\
& \geq \sum_{s \in \cV} \|w_s\|_2^2 \; \delta \\
& \geq \frac{\delta}{2} \; .
\end{align*}
This means that $\Delta(pM_0 p) \prec_{N \ovt M} N \ovt P$. We have reached the desired contradiction.
\end{proof}

We are ready to formulate and prove our uniqueness of group measure space Cartan theorem. We use the notation $\D_n(\C) \subset \M_n(\C)$ to denote the subalgebra of diagonal matrices.

\begin{theorem}\label{thm.unique-Cartan}
Let $\Gamma = \Gamma_1 *_\Sigma \Gamma_2$ be an amalgamated free product satisfying the following conditions: $\Gamma_1$ admits an infinite subgroup with property (T), $\Sigma$ is anti-(T) and $\Gamma_2 \neq \Sigma$. Assume moreover that there exist $g_1,\ldots,g_n \in \Gamma$ such that $\bigcap_{i=1}^n g_i \Sigma g_i^{-1}$ is finite. Let $\Gamma \actson (X,\mu)$ be any free ergodic pmp action and denote $M = \rL^\infty(X) \rtimes \Gamma$.

Whenever $\Lambda \actson (Y,\eta)$ is a free ergodic pmp action, $p \in \M_n(\C) \ot M$ is a projection and
$$\pi : \rL^\infty(Y) \rtimes \Lambda \recht p (\M_n(\C) \ot M)p$$
is a $*$-isomorphism, there exists a projection $q \in \D_n(\C) \ot \rL^\infty(X)$ and a unitary $u \in q(\M_n(\C) \ot M)p$ such that
$$\pi(\rL^\infty(Y)) = u^* (\D_n(\C) \ot \rL^\infty(X)) u \; .$$
\end{theorem}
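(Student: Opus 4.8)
The plan is to follow the strategy of \cite[Theorem 5.2]{PV09}, using the new transfer-of-rigidity Lemma \ref{lem.new-transfer} in place of \cite[Lemma 3.1]{PV09}. Write $A = \rL^\infty(X)$, which is a group measure space Cartan subalgebra of $M$, and set $B = \pi(\rL^\infty(Y))$, a Cartan subalgebra of $p(\M_n(\C)\ot M)p$ coming from the crossed-product decomposition $\rL^\infty(Y)\rtimes\Lambda$. Since any two group measure space Cartan subalgebras are unitarily conjugate once we know $B \prec A$ (this reduces to \cite[Theorem A.1]{Po01} together with the computation that produces the required projection $q \in \D_n(\C)\ot A$ and unitary $u$), the entire task is to prove the intertwining statement $B \prec_{\M_n(\C)\ot M} \D_n(\C)\ot A$. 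Equivalently, working inside $\M_n(\C)\ot M$, the goal is to show that the Cartan subalgebra $\D_n(\C)\ot A$ and the transported Cartan subalgebra $B$ intertwine.

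First I would exploit the amalgamated free product structure $\Gamma = \Gamma_1 *_\Sigma \Gamma_2$. The hypothesis that $\Gamma_1$ contains an \emph{infinite} property (T) subgroup, say $\Gamma_0 < \Gamma_1$, produces inside $M$ a diffuse subalgebra $M_0 = \rL\Gamma_0$ (more precisely a corner of it sitting in $pMp$ after the usual amplification bookkeeping) which is diffuse and has property (T); this is exactly the input $M_0$ demanded by Lemma \ref{lem.new-transfer}. The anti-(T) subalgebra $P$ in that lemma should be taken to be $A \rtimes \Sigma$, whose anti-(T)-ness follows from $\Sigma$ being anti-(T) together with the permanence of relative property (H) under crossed products and tensoring (recorded in the Preliminaries). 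Feeding the completely positive maps $\vphi_n$ coming from the rigid structure into Lemma \ref{lem.new-transfer}, and applying it to the crossed product decomposition $pMp = Q\rtimes\Lambda$ read off from $B$ (with $Q$ anti-(T) because it is a corner built from $\rL^\infty(Y)\rtimes$(an anti-(T) piece)), one extracts a single $n$ and a sequence of canonical unitaries $(v_{s_k})$ that are almost fixed by $\vphi_n$ yet whose $E_P$-conditional expectations tend to zero. The property (T) rigidity of $M_0$ then forces these nearly-fixed unitaries to stay close to $M_0$ in $\|\cdot\|_2$, which is incompatible with $\|E_P(a v_{s_k} b)\|_2 \to 0$ unless $M_0 \prec_M A\rtimes\Sigma$.

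The next step is the deamalgamation: having located $M_0$ inside $A\rtimes\Sigma$, I would use the assumption $\bigcap_{i=1}^n g_i\Sigma g_i^{-1}$ finite (sufficient non-normality of $\Sigma$) to control how $B$ sits relative to the building blocks $A\rtimes\Gamma_j$, and combine the malnormality-type estimates with the amalgamated free product malleable deformation / spectral gap arguments of \cite{PV09} to conclude $B\prec_M A$. This is precisely the place where the proof of \cite[Theorem 5.2]{PV09} is invoked essentially verbatim, the only genuine modification being the substitution of Lemma \ref{lem.new-transfer} for \cite[Lemma 3.1]{PV09}; the delicate conversion of the relation $B \prec A$ into the \emph{explicit} unitary $u$ and projection $q\in\D_n(\C)\ot A$ with $B = u^*(\D_n(\C)\ot A)u$ is exactly the technical point addressed separately in Proposition \ref{prop.final}, on which I would rely.

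\textbf{Main obstacle.} The hard part will be verifying that the abstract intertwining $B\prec_M A$ genuinely upgrades to a \emph{unitary conjugacy of Cartan subalgebras with the correct amplification data}, i.e.\ the passage from a partial isometry intertwiner to a full unitary $u$ realizing $\pi(\rL^\infty(Y)) = u^*(\D_n(\C)\ot\rL^\infty(X))u$. Because both algebras are Cartan in the relevant factor, \cite[Theorem A.1]{Po01} supplies a unitary conjugacy, but matching it against the matrix-amplification so that the conjugating unitary lands in $q(\M_n(\C)\ot M)p$ with $q\in\D_n(\C)\ot A$ requires the careful normalization that \cite[Theorems 5.2 and 1.4]{PV09} treated only briefly; this is exactly the gap that Proposition \ref{prop.final} is designed to fill, so I expect the real work to be in invoking it correctly rather than in the intertwining estimate itself.
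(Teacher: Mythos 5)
Your overall skeleton does match the paper's proof (Lemma \ref{lem.new-transfer} with $M_0 = \rL\Gamma_0$ for an infinite property (T) subgroup $\Gamma_0 < \Gamma_1$, with $P = A \rtimes \Sigma$ anti-(T), and with the decomposition $p(\M_n(\C)\ot M)p \cong \rL^\infty(Y)\rtimes\Lambda$ supplied by $\pi$; then an embedding into $A\rtimes\Sigma$; then Proposition \ref{prop.final}; then \cite[Theorem A.1]{Po01}). But the step in which you actually use the output of the transfer lemma --- the heart of the argument --- is wrong. You claim that the almost-invariant unitaries $v_{s_k}$ ``stay close to $M_0$,'' and that this is incompatible with $\|E_P(a v_{s_k} b)\|_2 \recht 0$ ``unless $M_0 \prec_M A\rtimes\Sigma$.'' This is backwards, and the conclusion you name is impossible: since $A\rtimes\Sigma$ is anti-(T) and $M_0$ is diffuse with property (T), repeated application of Lemma \ref{lem.technical} gives $M_0 \notembed{} A\rtimes\Sigma$; indeed, this non-embedding is exactly the starting point \emph{inside} the proof of Lemma \ref{lem.new-transfer}, not something one could ever conclude. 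The intertwining that must be produced concerns the unknown Cartan subalgebra $\pi(\rL^\infty(Y))$, not $M_0$. Concretely, writing $A = \M_n(\C)\ot\rL^\infty(X)$ and $N = A\rtimes\Gamma$, the paper feeds into Lemma \ref{lem.new-transfer} the word-length deformations $\m_\rho$ of the amalgamated free product (these come from the soft AFP structure of $\Gamma$, not ``from the rigid structure'' as you write); the lemma yields $s_k \in \Lambda$ with $\|\m_{\rho_1}(v_{s_k}) - v_{s_k}\|_2 \leq \eps$ and $\|E_{A\rtimes\Sigma}(x v_{s_k} y)\|_2 \recht 0$; then \cite[Lemma 5.7]{PV09} converts this into a uniform lower bound $\tau(w^*\m_\rho(w)) \geq \delta$ for all $w \in \cU(\pi(\rL^\infty(Y)))$, and \cite[Theorem 5.4]{PV09}, combined with the regularity of $\pi(\rL^\infty(Y))$ in $pNp$, gives $\pi(\rL^\infty(Y)) \prec_N A\rtimes\Sigma$. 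Your proposal omits this deformation/rigidity step entirely, so it never produces any intertwining statement about $\pi(\rL^\infty(Y))$ at all.

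A second, lesser error is your description of Proposition \ref{prop.final}. It is not the device that upgrades $\pi(\rL^\infty(Y)) \prec_N A$ to the unitary conjugacy with the amplification bookkeeping; that final upgrade is precisely \cite[Theorem A.1]{Po01} (intertwining of Cartan subalgebras is equivalent to unitary conjugacy). Proposition \ref{prop.final} is used one step earlier: exploiting quasi-regularity of the Cartan subalgebra, it upgrades $\pi(\rL^\infty(Y)) \prec_N A\rtimes\Sigma$ to $\pi(\rL^\infty(Y)) \prec_N A\rtimes\bigl(g_1\Sigma g_1^{-1}\cap\cdots\cap g_n\Sigma g_n^{-1}\bigr)$, and hence to $\pi(\rL^\infty(Y)) \prec_N A$ by the finiteness hypothesis on that intersection. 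Finally, a small point: in the application of Lemma \ref{lem.new-transfer}, the algebra $Q = \pi(\rL^\infty(Y))$ is anti-(T) simply because it is abelian; no ``anti-(T) piece'' of $\Lambda$ is involved.
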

\begin{proof}
Write $A = \M_n(\C) \ot \rL^\infty(X)$ and $N = A \rtimes \Gamma$. Put $B = \rL^\infty(Y)$. We first prove that $\pi(B) \prec_N A \rtimes \Sigma$. Denote by $|g|$ the length of $g \in \Gamma_1 *_\Sigma \Gamma_2$ as a reduced word. Denote for $0 < \rho < 1$ by $\m_\rho$ the corresponding completely positive maps on $N$ given by $\m_\rho(a u_g) = \rho^{|g|} a u_g$ for all $a \in A$, $g \in \Gamma$. When $\rho \recht 1$, we have $\m_\rho \recht \id$ pointwise in $\|\, \cdot \,\|_2$.

Write $\eps = \tau(p)/5000$ and $P = A \rtimes \Sigma$. Note that $P$ is anti-(T). By Lemma \ref{lem.new-transfer} take $0 < \rho_1 < 1$ and a sequence $(s_k)$ in $\Lambda$ satisfying $\|m_{\rho_1}(v_{s_k}) - v_{s_k}\|_2 \leq \eps$ for all $k$ and $\|E_P(x v_{s_k} y)\|_2 \recht 0$ for all $x,y \in N$. By \cite[Lemma 5.7]{PV09} there exists a $0 < \rho < 1$ and a $\delta > 0$ such that $\tau(w^* \m_\rho(w)) \geq \delta$ for all $w \in \cU(\pi(B))$. By \cite[Theorem 5.4]{PV09} and because $\pi(B)$ is regular in $pNp$, it follows that $\pi(B) \prec_N P$.

So we have shown that $\pi(B) \prec_N A \rtimes \Sigma$. By Proposition \ref{prop.final} below and since we have $g_1,\ldots,g_n \in \Gamma$ such that $\bigcap_{i=1}^n g_i \Sigma g_i^{-1}$ is finite, it follows that $\pi(B) \prec_N A$. The theorem now follows from \cite[Theorem A.1]{Po01}.
\end{proof}

\section{An embedding result, strengthening \cite[Theorem 6.16]{PV06}}

Assume that $(A,\tau)$ is a tracial von Neumann algebra and that $\Gamma \overset{\sigma}{\actson} (A,\tau)$ is a trace preserving action. We do not assume that $\sigma$ is properly outer or that $\sigma$ is ergodic. Let $M = A \rtimes \Gamma$.

Whenever $\Lambda < \Gamma$ is a subgroup, consider the basic construction $\langle M, e_{A \rtimes \Lambda} \rangle$. By definition $\langle M,e_{A \rtimes \Lambda} \rangle$ consists of those operators on $\rL^2(M)$ that commute with the right module action of $A \rtimes \Lambda$. The basic construction comes with a canonical operator valued weight $T_\Lambda$ from $\langle M, e_{A \rtimes \Lambda} \rangle^+$ to the extended positive part of $M$. For all $x,y \in M$, the element $x e_{A \rtimes \Lambda} y$ is integrable with respect to $T_\Lambda$ and $T_\Lambda(x e_{A \rtimes \Lambda} y) = xy$. Choose $g_i \in \Gamma$ such that $\Gamma = \bigsqcup_{i} g_i  \Lambda = \bigsqcup_{i} \Lambda g_i^{-1}$.
Denoting by $\rho_g$ the right multiplication operator by $u_g^*$ on $\rL^2(M)$, one checks that $\sum_i \rho_{g_i} e _{A \rtimes \Lambda} \rho_{g_i}^* = 1$, whence
$$T_\Lambda(x) = \sum_{i} \rho_{g_i} x \rho_{g_i}^* \quad\text{for all}\quad x \in \langle M, e_{A \rtimes \Lambda} \rangle^+ \; .$$
The canonical semi-finite trace $\Tr_\Lambda$ on $\langle M, e_{A \rtimes \Lambda} \rangle$ is given as the composition of $T_\Lambda$ and the trace $\tau$ on $M$.

Assume that $p \in M$ is a projection and $B \subset pMp$ is a quasi-regular subalgebra (see \cite[1.4.2]{Po01}). Recall that this means that the quasi-normalizer of $B$ inside $pMp$ is weakly dense in $pMp$. Obviously, regular subalgebras $B \subset pMp$ or, even more specifically, Cartan subalgebras $B \subset pMp$ are quasi-regular.

Given a subgroup $\Lambda < \Gamma$, let $H \subset p\rL^2(M)$ be the closed linear span of all $B$-$(A \rtimes \Lambda)$-subbimodules of $p\rL^2(M)$ that are finitely generated as a right Hilbert $(A \rtimes \Lambda)$-module. Since $B \subset pMp$ is quasi-regular, $H$ is stable by left multiplication with $pMp$. The subspace $H$ is also invariant under right multiplication by $A \rtimes \Lambda$. So $H$ is of the form $p\rL^2(M)z(\Lambda)$ for some projection $z(\Lambda) \in M \cap (A \rtimes \Lambda)'$. We make $z(\Lambda)$ uniquely defined by requiring that $z(\Lambda)$ is smaller or equal than the central support of $p$ in $M$.

Note that by definition $z(\Lambda) \neq 0$ if and only if $B \embed{M} A \rtimes \Lambda$.

Denote by $J : \rL^2(M) \recht \rL^2(M)$ the adjoint operator. So, given $x \in M$, $Jx^*J$ is the operator of right multiplication with $x$. Denote by $\support a$ the support projection of a positive operator $a$. Observe that
\begin{equation}\label{eq.benader}
\begin{split}
p & J z(\Lambda)J  \\ &= \bigvee \{q \mid q\;\;\text{is an orthogonal projection in $B' \cap p\langle M,e_{A \rtimes \Lambda} \rangle p$ satisfying}\;\; \Tr_\Lambda(q) < \infty\} \\
& = \bigvee \{q \mid q\;\;\text{is an orthogonal projection in $B' \cap p \langle M,e_{A \rtimes \Lambda} \rangle p$ satisfying}\;\; \|T_\Lambda(q)\| < \infty\} \\
& = \bigvee \{\support a \mid a \in (B' \cap p \langle M,e_{A \rtimes \Lambda} \rangle p)^+ \;\;\text{and}\;\; \|T_\Lambda(a)\| < \infty \} \; .
\end{split}
\end{equation}
If $a$ and $b$ are positive operators, then $\support (a) \vee \support(b) = \support(a+b)$. So we find a sequence of elements $a_n \in (B' \cap p \langle M,e_{A \rtimes \Lambda} \rangle p)^+$ such that all $T_\Lambda(a_n)$ are bounded and $\support(a_n) \recht p J z(\Lambda)J$ strongly. Moreover, every projection $\support(a_n)$ can be strongly approximated by a spectral projection of the form $q_n = \chi_{[\eps_n,+\infty)}(a_n)$ for $\eps_n > 0$ sufficiently small. We have $q_n \leq \frac{1}{\eps_n} a_n$ so that $T_\Lambda(q_n)$ is bounded. Hence we find a sequence of projections $q_n \in B' \cap p \langle M,e_{A \rtimes \Lambda} \rangle p$ such that $q_n \recht p Jz(\Lambda)J$ strongly and $\|T_\Lambda(q_n)\| < \infty$ for all $n$.

Note that $z(\Lambda_1) \leq z(\Lambda_2)$ when $\Lambda_1 < \Lambda_2 < \Gamma$. Indeed, it suffices to observe that for every $B$-$(A \rtimes \Lambda_1)$-subbimodule $K \subset p \rL^2(M)$ that is finitely generated as a right Hilbert module, the closed linear span of $K (A \rtimes \Lambda_2)$ is a $B$-$(A \rtimes \Lambda_2)$-subbimodule of $p \rL^2(M)$ that is finitely generated as a right Hilbert module. Hence $p \rL^2(M) z(\Lambda_1) \subset p \rL^2(M) z(\Lambda_2)$. Since by convention $z(\Lambda_1)$ and $z(\Lambda_2)$ are smaller or equal than the central support of $p$, we conclude that $z(\Lambda_1) \leq z(\Lambda_2)$.

By definition, $z(g\Lambda g^{-1}) = u_g z(\Lambda) u_g^*$ for all $g \in \Gamma$, $\Lambda < \Gamma$.

\begin{proposition}\label{prop.intersection}
Let $M = A \rtimes \Gamma$ for some trace preserving action of a countable group $\Gamma$ on the tracial von Neumann algebra $(A,\tau)$. Let $B \subset pMp$ be a quasi-regular von Neumann subalgebra. For every subgroup $\Lambda < \Gamma$, define as above the projection $z(\Lambda) \in M \cap (A \rtimes \Lambda)'$ such that $p \rL^2(M) z(\Lambda)$ equals the closed linear span of all $B$-$(A \rtimes \Lambda)$-subbimodules of $p \rL^2(M)$ that are finitely generated as a right Hilbert module.

If $\Sigma < \Gamma$ and $\Lambda < \Gamma$ are subgroups, then the projections $z(\Sigma)$ and $z(\Lambda)$ commute and satisfy
$$z(\Sigma \cap \Lambda) = z(\Sigma) \, z(\Lambda) \; .$$
\end{proposition}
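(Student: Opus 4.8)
The plan is to transport everything to the basic constructions and to work with the projections $f_\Lambda := pJz(\Lambda)J$. Writing $\cM_\Lambda := \langle M, e_{A\rtimes\Lambda}\rangle$, note that $\Sigma\cap\Lambda < \Sigma,\Lambda$ gives $\cM_\Sigma, \cM_\Lambda \subset \cM_{\Sigma\cap\Lambda}$, and that by \eqref{eq.benader} each $f_\Lambda$ is a projection in $B' \cap p\cM_\Lambda p$. Since $z(\Lambda) \in M \cap (A\rtimes\Lambda)'$, one has $Jz(\Lambda)J \in \cM_\Lambda$ and, as $p \in M$ commutes with $M' \ni Jz(\Lambda)J$, a direct computation gives $f_\Sigma f_\Lambda = pJz(\Sigma)z(\Lambda)J$ and $f_\Lambda f_\Sigma = pJz(\Lambda)z(\Sigma)J$. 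The normal map $x \mapsto pJxJ$ is injective on $cMc$, where $c$ denotes the central support of $p$ (if $pJxJ=0$ then $pMx^*=0$, hence $cx^*=0$); since all the relevant projections $z(\cdot)$ are $\le c$, it suffices to prove the single operator identity $f_\Sigma f_\Lambda = f_\Lambda f_\Sigma = f_{\Sigma\cap\Lambda}$, from which both the commutation of $z(\Sigma),z(\Lambda)$ and the formula $z(\Sigma\cap\Lambda)=z(\Sigma)z(\Lambda)$ follow.

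I would then reduce this to a single sandwich inequality. By the monotonicity of $\Lambda \mapsto z(\Lambda)$ recalled in the excerpt, $f_{\Sigma\cap\Lambda} \le f_\Sigma$ and $f_{\Sigma\cap\Lambda} \le f_\Lambda$, so that $f_\Sigma f_\Lambda f_\Sigma \ge f_\Sigma f_{\Sigma\cap\Lambda} f_\Sigma = f_{\Sigma\cap\Lambda}$. Conversely, if I can show $f_\Sigma f_\Lambda f_\Sigma \le f_{\Sigma\cap\Lambda}$, then $f_\Sigma f_\Lambda f_\Sigma = f_{\Sigma\cap\Lambda}$, hence $f_\Sigma(f_\Lambda - f_{\Sigma\cap\Lambda})f_\Sigma = 0$ with $f_\Lambda - f_{\Sigma\cap\Lambda} \ge 0$; this forces $(f_\Lambda - f_{\Sigma\cap\Lambda})f_\Sigma = 0$, i.e. $f_\Lambda f_\Sigma = f_{\Sigma\cap\Lambda}f_\Sigma = f_{\Sigma\cap\Lambda}$, and taking adjoints also $f_\Sigma f_\Lambda = f_{\Sigma\cap\Lambda}$. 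Thus the whole proposition comes down to the estimate $f_\Sigma f_\Lambda f_\Sigma \le f_{\Sigma\cap\Lambda}$.

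The heart of the matter, which I expect to be the main obstacle, is the following finiteness estimate. Let $q \in B' \cap p\cM_\Sigma p$ be a projection with $\Tr_\Sigma(q) < \infty$ and let $r \in B' \cap p\cM_\Lambda p$ be a projection with $\|T_\Lambda(r)\| < \infty$; then $qrq$ is a positive element of $B' \cap p\cM_{\Sigma\cap\Lambda}p$, and I claim $\Tr_{\Sigma\cap\Lambda}(qrq) \le \|T_\Lambda(r)\|\,\Tr_\Sigma(q) < \infty$. To prove it, use the standard expression of the canonical trace $\Tr_{\Sigma\cap\Lambda}(qrq) = \sum_{g} \|rq\,\widehat{u_g^*}\|_2^2$, the sum running over a transversal of $(\Sigma\cap\Lambda)\backslash\Gamma$ (with the routine additional averaging over an orthonormal basis of $\rL^2(A)$, which I suppress). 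Grouping these cosets along the projection $(\Sigma\cap\Lambda)\backslash\Gamma \to \Sigma\backslash\Gamma$, over a fixed $\Sigma$-coset with representative $g_0$ the contributing terms are indexed by $\sigma$ in a transversal of $(\Sigma\cap\Lambda)\backslash\Sigma$, and since $q \in \cM_\Sigma$ commutes with the right translations $\rho_\sigma$ for $\sigma \in \Sigma$ one gets $q\,\widehat{u_{\sigma g_0}^*} = \rho_\sigma\bigl(q\,\widehat{u_{g_0}^*}\bigr)$. With $\zeta = q\widehat{u_{g_0}^*}$ the inner sum equals $\langle W\zeta,\zeta\rangle$, where $W = \sum_\sigma \rho_\sigma^* r \rho_\sigma$. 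The key point is that $\sigma \mapsto \sigma^{-1}\Lambda$ embeds the transversal of $(\Sigma\cap\Lambda)\backslash\Sigma$ into $\Gamma/\Lambda$ (two elements of $\Sigma$ lie in the same $\Lambda$-coset iff in the same $(\Sigma\cap\Lambda)$-coset), so $W$ is a subsum of $T_\Lambda(r)$ and $\|W\| \le \|T_\Lambda(r)\|$. Summing over the $\Sigma$-cosets gives $\Tr_{\Sigma\cap\Lambda}(qrq) \le \|T_\Lambda(r)\| \sum_{g_0}\|q\widehat{u_{g_0}^*}\|_2^2 = \|T_\Lambda(r)\|\,\Tr_\Sigma(q)$.

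Granting the estimate, the proof finishes quickly. By \eqref{eq.benader} there are increasing sequences of projections $q_i \uparrow f_\Sigma$ with $\Tr_\Sigma(q_i) < \infty$ and $r_j \uparrow f_\Lambda$ with $\|T_\Lambda(r_j)\| < \infty$, all in $B' \cap p\langle M, e_{A\rtimes(\Sigma\cap\Lambda)}\rangle p$. For each $i,j$ the element $q_i r_j q_i$ is a positive element of $B' \cap p\cM_{\Sigma\cap\Lambda}p$ of finite $\Tr_{\Sigma\cap\Lambda}$, so by \eqref{eq.benader} its support, and hence $q_i r_j q_i$ itself (of norm $\le 1$), is dominated by $f_{\Sigma\cap\Lambda}$. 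Letting first $j \to \infty$ and then $i \to \infty$, joint strong continuity of multiplication on bounded sets gives $q_i r_j q_i \to f_\Sigma f_\Lambda f_\Sigma$, whence $f_\Sigma f_\Lambda f_\Sigma \le f_{\Sigma\cap\Lambda}$, as required. The one genuinely delicate step is the estimate of the previous paragraph: the possibly infinite indices $[\Sigma : \Sigma\cap\Lambda]$ and $[\Lambda : \Sigma\cap\Lambda]$ make $\Tr_{\Sigma\cap\Lambda}(q)$ and $\Tr_{\Sigma\cap\Lambda}(r)$ separately infinite, and it is exactly the coset-injectivity observation, the quantitative form of the principle that being finite over $\Sigma$ and over $\Lambda$ forces finiteness over $\Sigma\cap\Lambda$, that keeps the mixed quantity $\Tr_{\Sigma\cap\Lambda}(qrq)$ finite.
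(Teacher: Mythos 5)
Your proof is correct and is essentially the paper's own argument: both proofs approximate $pJz(\Sigma)J$ and $pJz(\Lambda)J$ by projections in $B'\cap p\langle M,e_{A\rtimes\Sigma}\rangle p$, resp.\ $B'\cap p\langle M,e_{A\rtimes\Lambda}\rangle p$, that are finite for the canonical weights, show that the sandwiched product is finite over $A\rtimes(\Sigma\cap\Lambda)$ by precisely your coset-injectivity observation (the paper phrases it as: the cosets $h_j\Sigma$ are disjoint when $h_j$ runs over a transversal of $\Lambda/(\Sigma\cap\Lambda)$), and then pass to strong limits. The only deviations are bookkeeping: the paper sandwiches in the opposite order and proves the operator bound $T_{\Sigma\cap\Lambda}(e_n q_n e_n)\le\|T_\Sigma(q_n)\|\,T_\Lambda(e_n)$ instead of your scalar bound $\Tr_{\Sigma\cap\Lambda}(qrq)\le\|T_\Lambda(r)\|\,\Tr_\Sigma(q)$, and it concludes through a chain of range inclusions rather than your inequality $f_\Sigma f_\Lambda f_\Sigma\le f_{\Sigma\cap\Lambda}$, with the injectivity of $x\mapsto pJxJ$ on $cMc$ (which you make explicit) left implicit.
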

\begin{proof}
We use the operator valued weight $T_\Sigma$ as explained above. As we saw after formulae \eqref{eq.benader} we can take projections $q_n \in B' \cap p \langle M,e_{A \rtimes \Sigma}\rangle p$ and $e_n \in B' \cap p \langle M,e_{A \rtimes \Lambda}\rangle p$ such that $q_n \recht p Jz(\Sigma)J$ and $e_n \recht p Jz(\Lambda)J$ strongly and such that for all $n \in \N$, we have $\|T_\Sigma(q_n)\| < \infty$ and $\|T_\Lambda(e_n)\| < \infty$. Note that $e_n q_n e_n \in (B' \cap p \langle M,e_{A \rtimes (\Sigma \cap \Lambda)}\rangle p)^+$. We claim that
\begin{equation}\label{eq.claim}
\|T_{\Sigma \cap \Lambda}(e_n q_n e_n)\| < \infty \quad\text{for all}\;\; n \in \N \; .
\end{equation}
Fix $n \in \N$. Take $g_i \in \Gamma$ such that $\Gamma = \bigsqcup_{i} g_i \Lambda$. Take $h_j \in \Lambda$ such that $\Lambda = \bigsqcup_j h_j (\Sigma \cap \Lambda)$. Note that the cosets $h_j \Sigma$ are disjoint and that $\Gamma = \bigsqcup_{i,j} g_i h_j (\Sigma \cap \Lambda)$. Because of the latter, we have
$$T_{\Sigma \cap \Lambda}(e_n q_n e_n) = \sum_{i,j} \rho_{g_i h_j} e_n q_n e_n \rho_{g_i h_j}^* = \sum_i \rho_{g_i} e_n \Bigl(\sum_j \rho_{h_j} q_n \rho_{h_j}^*\Bigr) e_n \rho_{g_i}^* \; .$$
Because the cosets $h_j \Sigma$ are disjoint, we know that
$$\sum_j \rho_{h_j} q_n \rho_{h_j}^* \leq \sum_{h \in \Gamma/\Sigma} \rho_h q_n \rho_h^* = T_\Sigma(q_n) \leq \|T_\Sigma(q_n)\| \, 1 \; .$$
Therefore,
$$T_{\Sigma \cap \Lambda}(e_n q_n e_n) \leq \|T_\Sigma(q_n)\| \; \sum_i \rho_{g_i} e_n \rho_{g_i}^* = \|T_\Sigma(q_n)\| \, T_\Lambda(e_n) \; .$$
So claim \eqref{eq.claim} is proven.

Since $e_n q_n e_n \in (B' \cap p \langle M,e_{A \rtimes (\Sigma \cap \Lambda)}\rangle p)^+$ claim \eqref{eq.claim} implies that
$$\support(e_n q_n e_n) \leq p J z(\Sigma \cap \Lambda) J \quad\text{for all}\;\; n \in \N \; .$$
Hence, $\Ran (e_n q_n) \subset p \rL^2(M) z(\Sigma \cap \Lambda)$ for all $n$. Since $e_n \recht p Jz(\Lambda) J$ and $q_n \recht p J z(\Sigma) J$ strongly and since $z(\Lambda)$ and $z(\Sigma)$ are chosen below the central support of $p$, it follows that
$$\Ran (z(\Lambda) z(\Sigma)) \subset \Ran z(\Sigma \cap \Lambda) \; .$$
Since $z(\Sigma \cap \Lambda) \leq z(\Sigma)$ and $z(\Sigma \cap \Lambda) \leq z(\Lambda)$, we get the chain of inclusions
$$\Ran z(\Lambda) \cap \Ran z(\Sigma) \subset \Ran (z(\Lambda) z(\Sigma)) \subset \Ran z(\Sigma \cap \Lambda) \subset \Ran z(\Lambda) \cap \Ran z(\Sigma) \; .$$
So all these inclusions are equalities. This means that $z(\Lambda)$ and $z(\Sigma)$ are commuting projections and
$z(\Sigma \cap \Lambda) = z(\Sigma) \, z(\Lambda)$.
\end{proof}

As an immediate corollary we find the following generalization of \cite[Theorem 6.16]{PV06}.

\begin{corollary}
Let $\Gamma \actson A$ be a trace preserving action, $M = A \rtimes \Gamma$, $\Sigma < \Gamma$ a subgroup and $B \subset pMp$ a quasi-regular von Neumann subalgebra. If $z(\Sigma)$ equals the central support of $p$, then the same is true for $z(g_1 \Sigma g_1^{-1} \cap \cdots \cap g_n \Sigma g_n^{-1})$ and in particular
\begin{equation}\label{eq.conclusion}
B \embed{M} A \rtimes (g_1 \Sigma g_1^{-1} \cap \cdots \cap g_n \Sigma g_n^{-1})
\end{equation}
for all $g_1,\ldots,g_n \in \Gamma$
\end{corollary}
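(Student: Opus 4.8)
The plan is to deduce this corollary formally from Proposition \ref{prop.intersection}, using two facts recorded just before it: the covariance relation $z(g\Sigma g^{-1}) = u_g z(\Sigma) u_g^*$ and the equivalence $z(\Lambda) \neq 0 \Leftrightarrow B \embed{M} A \rtimes \Lambda$. All of the substance already lives in the proposition; the corollary is then a short manipulation of central supports.

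First I would observe that the central support $c$ of $p$ in $M$ lies in the center $\rZ(M)$ and is therefore fixed by every inner automorphism. Since by hypothesis $z(\Sigma) = c$, the covariance relation gives, for each $i$,
$$z(g_i \Sigma g_i^{-1}) = u_{g_i}\, z(\Sigma)\, u_{g_i}^* = u_{g_i}\, c\, u_{g_i}^* = c \; ,$$
the final equality holding precisely because $c$ is central.

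Next I would iterate Proposition \ref{prop.intersection} over the conjugates $\Lambda_i := g_i \Sigma g_i^{-1}$. Applying the identity $z(\,\cdot \cap \cdot\,) = z(\,\cdot\,)\, z(\,\cdot\,)$ repeatedly, intersecting $\Lambda_1 \cap \cdots \cap \Lambda_{k-1}$ with $\Lambda_k$ at the $k$-th step, produces $z\bigl(\bigcap_{i=1}^n \Lambda_i\bigr) = \prod_{i=1}^n z(\Lambda_i) = c^n = c$, where $c^n = c$ since $c$ is a projection. Hence $z(g_1 \Sigma g_1^{-1} \cap \cdots \cap g_n \Sigma g_n^{-1})$ again equals the central support of $p$.

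Finally, as $p \neq 0$ forces $c \neq 0$, the equivalence between nonvanishing of $z$ and intertwining immediately yields \eqref{eq.conclusion}. I expect no genuine obstacle here: the only two points needing a moment's attention are the centrality of $c$, which trivializes the conjugation in the displayed line, and the legitimacy of iterating the proposition, both of which are immediate.
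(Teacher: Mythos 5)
Your proof is correct and follows exactly the route the paper intends: the corollary is stated there without proof as an immediate consequence of Proposition \ref{prop.intersection}, and your three steps --- centrality of the central support making $u_{g_i} c\, u_{g_i}^* = c$, iterated application of $z(\Sigma \cap \Lambda) = z(\Sigma)z(\Lambda)$ to the conjugates, and the equivalence $z(\Lambda) \neq 0 \Leftrightarrow B \embed{M} A \rtimes \Lambda$ --- are precisely the details being left to the reader.
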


\section{Application to the proofs of Theorem \ref{thm.unique-Cartan}, \cite[Theorems 5.2 and 1.4]{PV09} and \cite[Theorem 4.1]{FV10}}

In the setup of Theorem \ref{thm.unique-Cartan} and \cite[Theorems 5.2 and 1.4]{PV09} we know that $\Gamma = \Gamma_1 *_\Sigma \Gamma_2$ is an amalgamated free product and we know that $A \rtimes \Gamma$ is a factor. We prove that in such a situation, whenever $B \subset p(A \rtimes \Gamma)p$ is a quasi-regular subalgebra, $z(\Sigma)$ can only take the values $0$ or $1$. A similar statement is true when $\Gamma = \HNN(H,\Sigma,\theta)$ is an HNN extension\footnote{The HNN extension $\HNN(H,\Sigma,\theta)$ is generated by a copy of $H$ and an element $t$, called stable letter, subject to the relation $t \sigma t^{-1} = \theta(\sigma)$ for all $\sigma \in \Sigma$.} of a countable group $H$ with a subgroup $\Sigma < H$ and an injective group homomorphism $\theta: \Sigma \recht H$.

The precise formulation goes as follows.

\begin{proposition} \label{prop.final}
Let $\Gamma$ either be an amalgamated free product $\Gamma  = \Gamma_1 *_\Sigma \Gamma_2$ with $\Gamma_1 \neq \Sigma \neq \Gamma_2$ or an arbitrary HNN extension $\Gamma = \HNN(H,\Sigma,\theta)$. Let $(A,\tau)$ be a tracial von Neumann algebra and $\Gamma \actson A$ a trace preserving action. Put $M = A \rtimes \Gamma$ and let $B \subset pMp$ be a quasi-regular von Neumann subalgebra. As above we define for every subgroup $\Lambda < \Gamma$, the projection $z(\Lambda) \in M \cap (A \rtimes \Lambda)'$ such that $p\rL^2(M) z(\Lambda)$ is the closed linear span of all $B$-$(A \rtimes \Lambda)$-subbimodules of $p \rL^2(M)$ that are finitely generated as a right Hilbert module.

The projection $z(\Sigma)$ belongs to the center of $M$.
So, if $M$ is a factor and if $B \embed{M} A \rtimes \Sigma$, then $z(\Sigma) = 1$ and
$$B \embed{M} A \rtimes (g_1 \Sigma g_1^{-1} \cap \cdots \cap g_n \Sigma g_n^{-1})$$
for all $g_1,\ldots,g_n \in \Gamma$.
\end{proposition}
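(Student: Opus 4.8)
The plan is to isolate the single nontrivial assertion, namely that $z(\Sigma)$ lies in the center $\rZ(M)$, and then obtain the rest formally. Once centrality is known and $M$ is a factor, the projection $z(\Sigma)$ can only be $0$ or the central support of $p$; and since $B \embed{M} A \rtimes \Sigma$ is equivalent to $z(\Sigma) \neq 0$, the hypothesis forces $z(\Sigma)$ to equal the central support of $p$. The displayed embedding into $A \rtimes (g_1 \Sigma g_1^{-1} \cap \cdots \cap g_n \Sigma g_n^{-1})$ is then precisely the content of the Corollary following Proposition \ref{prop.intersection}. So I concentrate entirely on proving $z(\Sigma) \in \rZ(M)$.

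First I would reduce centrality to an orbit statement. Since $z(\Sigma) \in M \cap (A \rtimes \Sigma)'$, it already commutes with $A$ and with every $u_\sigma$, $\sigma \in \Sigma$, so $z(\Sigma) \in \rZ(M)$ is equivalent to $u_g z(\Sigma) u_g^* = z(\Sigma)$ for all $g$, i.e.\ to $z(g\Sigma g^{-1}) = z(\Sigma)$ via the covariance $z(g\Sigma g^{-1}) = u_g z(\Sigma) u_g^*$. The set of such $g$ is a subgroup of $\Gamma$, so it suffices to treat $g \in \Gamma_1 \cup \Gamma_2$, which generates $\Gamma$. I will establish the two equalities
$$z(\Sigma) = z(\Gamma_1) \quad\text{and}\quad z(\Sigma) = z(\Gamma_2) \; .$$
Granting these, for $g \in \Gamma_1$ we have $g\Gamma_1 g^{-1} = \Gamma_1$, so $z(g\Sigma g^{-1}) = z(g\Gamma_1 g^{-1}) = z(\Gamma_1) = z(\Sigma)$, where the first equality is the $u_g$-conjugate of $z(\Sigma) = z(\Gamma_1)$; the case $g \in \Gamma_2$ is symmetric. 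Throughout, all the projections $z(\Lambda)$ pairwise commute by Proposition \ref{prop.intersection}, which is what makes these manipulations legitimate.

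The crux is thus the equality $z(\Gamma_1) = z(\Sigma)$, its mirror for $\Gamma_2$ being identical. The inequality $z(\Sigma) \leq z(\Gamma_1)$ is immediate from monotonicity, so the real work is $z(\Gamma_1) \leq z(\Sigma)$. I would argue inside the Jones basic constructions, exploiting $\Sigma < \Gamma_1$ to get $e_{A \rtimes \Sigma} \leq e_{A \rtimes \Gamma_1}$ and $\langle M, e_{A \rtimes \Gamma_1}\rangle \subset \langle M, e_{A \rtimes \Sigma}\rangle$. By the discussion following \eqref{eq.benader}, choose projections $q_n \in B' \cap p\langle M, e_{A \rtimes \Gamma_1}\rangle p$ with $\|T_{\Gamma_1}(q_n)\| < \infty$ and $q_n \recht pJz(\Gamma_1)J$ strongly. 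It then suffices to show each such $q_n$ also satisfies $\|T_\Sigma(q_n)\| < \infty$: for then $\support(q_n) \leq pJz(\Sigma)J$ by \eqref{eq.benader}, and letting $n \recht \infty$ gives $pJz(\Gamma_1)J \leq pJz(\Sigma)J$, that is $z(\Gamma_1) \leq z(\Sigma)$.

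The main obstacle is exactly this finiteness $\|T_\Sigma(q_n)\| < \infty$. Since $[\Gamma_1 : \Sigma]$ is typically infinite, passing from $T_{\Gamma_1}$ to $T_\Sigma$ means summing $\rho_h q_n \rho_h^*$ over the infinitely many cosets $h \in \Gamma_1/\Sigma$, which diverges if handled naively; this is the only point where the hypotheses must genuinely enter. The leverage is that $q_n$ commutes with the \emph{quasi-regular} algebra $B$: quasi-regularity should force the range of $q_n$ — a right $A \rtimes \Gamma_1$-module sitting at the vertex $v_1$ of the Bass--Serre tree — to be finitely generated already over $A \rtimes \Sigma$, i.e.\ concentrated on finitely many edges at $v_1$. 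The geometric input making this confinement possible is the malnormality-type fact, valid because $\Gamma_1 \neq \Sigma \neq \Gamma_2$, that $\Gamma_1 \cap g\Gamma_1 g^{-1}$ is contained in a conjugate of $\Sigma$ for every $g \notin \Gamma_1$; fed into Proposition \ref{prop.intersection}, this prevents the translates $\rho_h q_n \rho_h^*$ from overlapping outside $pJz(\Sigma)J$ and yields the bound. I expect this coset-confinement step to be the delicate part of the argument. Finally, the HNN case $\Gamma = \HNN(H,\Sigma,\theta)$ runs along identical lines: the vertex group $H$ plays the role of both $\Gamma_1$ and $\Gamma_2$, one proves $z(\Sigma) = z(H)$ by the same basic-construction and tree argument, and centrality is then propagated using that $H$ together with the stable letter $t$ generate $\Gamma$ and that $z(t\Sigma t^{-1}) = z(\theta(\Sigma)) \leq z(H) = z(\Sigma)$.
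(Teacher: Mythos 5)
Your reduction matches the paper's: everything follows formally once $z(\Sigma) = z(\Gamma_1) = z(\Gamma_2)$ is proved, and you correctly identify $z(\Gamma_1) \leq z(\Sigma)$ as the crux and the intersection property $g\Gamma_1 g^{-1} \cap \Gamma_1 \subset \Sigma$ as the group-theoretic input. But the mechanism you propose for the crux is not merely left unproved (you yourself flag the "coset-confinement step" as open) --- it cannot work. You want projections $q_n \in B' \cap p\langle M, e_{A\rtimes\Gamma_1}\rangle p$ with $\|T_{\Gamma_1}(q_n)\| < \infty$ and $q_n \recht pJz(\Gamma_1)J$ to also satisfy $\|T_\Sigma(q_n)\| < \infty$. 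However, every $x \in \langle M, e_{A\rtimes\Gamma_1}\rangle^+$ commutes with the right multiplication operators $\rho_h$, $h \in \Gamma_1$. Choosing coset representatives $\Gamma = \bigsqcup_i g_i\Gamma_1$ and $\Gamma_1 = \bigsqcup_j h_j \Sigma$, so that $\Gamma = \bigsqcup_{i,j} g_i h_j\Sigma$, one gets
$$T_\Sigma(x) \;=\; \sum_{i,j}\rho_{g_i}\,\rho_{h_j}x\rho_{h_j}^*\,\rho_{g_i}^* \;=\; \sum_{i,j}\rho_{g_i}x\rho_{g_i}^* \;=\; [\Gamma_1:\Sigma]\; T_{\Gamma_1}(x)\;.$$
So when $[\Gamma_1:\Sigma] = \infty$ (the case of interest), no nonzero positive element of the $\Gamma_1$-basic construction has bounded --- or even finite --- $T_\Sigma$; the witnesses for $z(\Sigma)$ necessarily live in the strictly larger algebra $B'\cap p\langle M, e_{A\rtimes\Sigma}\rangle p$, and no appeal to quasi-regularity or the Bass--Serre tree can change that. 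Concretely, with $B = A$ a Cartan subalgebra and $p=1$, the projection $q = e_{A\rtimes\Gamma_1}$ commutes with $B$, has $T_{\Gamma_1}(q) = 1$, and has $T_\Sigma(q) = \infty$.

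The paper's actual argument never compares $T_{\Gamma_1}$ and $T_\Sigma$ on the same element. Writing $z_1 = z(\Gamma_1)$ and letting $S$ be the set of $g \in \Gamma$ admitting a reduced expression starting with a letter of $\Gamma_2 - \Sigma$, the inclusion $g\Gamma_1g^{-1}\cap\Gamma_1 \subset \Sigma$ together with Proposition \ref{prop.intersection} gives, for every $g \in S$, that $u_g z_1 u_g^*$ and $z_1$ commute and that $z(\Sigma) \geq z(g\Gamma_1g^{-1}\cap\Gamma_1) = u_g z_1 u_g^*\, z_1$. The engine of the proof is then the claim $z_1 = \bigvee_{g\in S}u_g z_1u_g^*\, z_1$, established by a finite-trace orthogonality trick: the defect projection $q = z_1 - \bigvee_{g\in S}u_g z_1u_g^* z_1$ satisfies $q\, u_g q u_g^* = 0$ for all $g \in S$, so with $a \in \Gamma_1-\Sigma$ and $b\in\Gamma_2-\Sigma$ the translates $u_{(ba)^n}\,q\,u_{(ba)^n}^*$ are mutually orthogonal (since $(ba)^{m-n}\in S$), forcing $\tau(q)=0$ and hence $q = 0$. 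This trace argument, for which your proposal contains no substitute, is precisely what converts the malnormality-type fact into the inequality $z_1 \leq z(\Sigma)$; the same remark applies to the HNN case, which you propose to treat "along identical lines."
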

\begin{proof}
Assume first that $\Gamma = \Gamma_1 *_\Sigma \Gamma_2$ is a non-trivial amalgamated free product.
We use Proposition \ref{prop.intersection} to prove that $z(\Sigma) = z(\Gamma_1)$. Once this is proven, by symmetry also $z(\Sigma) = z(\Gamma_2)$. But then $z(\Sigma)$ commutes with both $A \rtimes \Gamma_1$ and $A \rtimes \Gamma_2$, so that $z(\Sigma)$ belongs to the center of $M$.

Put $z_1 = z(\Gamma_1)$. Define $S \subset \Gamma$ as the set of elements $g \in \Gamma$ that admit a reduced expression starting with a letter from $\Gamma_2-\Sigma$. Whenever $g \in S$, we have $g \Gamma_1 g^{-1} \cap \Gamma_1 \subset \Sigma$. It follows from Proposition \ref{prop.intersection} that the projections $z_1 = z(\Gamma_1)$ and $u_g z_1 u_g^* = z(g \Gamma_1 g^{-1})$ commute and that
\begin{equation}\label{eq.intermediate}
z(\Sigma) \geq z(g \Gamma_1 g^{-1} \cap \Gamma_1) = u_g z_1 u_g^* \; z_1 \quad\text{for all}\;\; g \in S \; .
\end{equation}
We claim that
$$z_1 = \bigvee_{g \in S} u_g z_1 u_g^* \; z_1 \;\; .$$
To prove this claim, put
$$q = z_1 - \bigvee_{g \in S} u_g z_1 u_g^* \; z_1 \;\; .$$
Whenever $g \in S$, we have
$$q \; u_g q u_g^* = q z_1 \; u_g z_1 q u_g^* = q \; z_1 \; u_g z_1 u_g^* \; u_g q u_g^* = q \; u_g z_1 u_g^* \; z_1 \; u_g q u_g^* = 0 \; .$$
Take $a \in \Gamma_1 - \Sigma$ and $b \in \Gamma_2 - \Sigma$ and put $u_n = u_{(ba)^n}$. It follows that the projections $u_n q u_n^*$, $n \in \N$, are mutually orthogonal. Indeed, if $n < m$, we have
$$u_n q u_n^* \; u_m q u_m^* = u_n \bigl(q \; u_{m-n} q u_{m-n}^* \bigr) u_n^* = 0$$
because $(ba)^{m-n} \in S$. Since $\tau$ is a finite trace on $M$, it follows that $\tau(q) = 0$ and hence, $q=0$. This proves the claim. In combination with \eqref{eq.intermediate} it follows that $z(\Sigma) \geq z_1$. Hence, $z(\Sigma) = z_1$.

Next assume that $\Gamma = \HNN(H,\Sigma,\theta)$. Denote by $t \in \Gamma$ the stable letter. For every $n \geq 1$, one has $t^{-n} H t^n \cap H \subset \Sigma$. The same argument as in the case of amalgamated free products shows that $z(\Sigma) = z(H)$. We also have $\Sigma \subset t^{-1} H t$. Hence $z(\Sigma) \leq z(t^{-1} H t)$. Since $z(H) = z(\Sigma)$ and $z(t^{-1} H t) = u_t^* z(H) u_t$, we conclude that $z(H) \leq u_t^* z(H) u_t$. The left and right hand side have the same trace and hence must be equal. It follows that $z(\Sigma)$ commutes with $u_t$. Since $z(\Sigma)$ already commutes with $A \rtimes H$, it follows that $z(\Sigma)$ belongs to the center of $M$.

Finally consider the special case where $M$ is a factor and $B \embed{M} A \rtimes \Sigma$. The latter precisely means that $z(\Sigma) \neq 0$. Since $z(\Sigma)$ is a projection in the center of $M$, it follows that $z(\Sigma) = 1$. Then also $z(g \Sigma g^{-1}) = 1$ for all $g \in \Gamma$. By Proposition \ref{prop.intersection}, we get that
$$z(g_1 \Sigma g_1^{-1} \cap \cdots \cap g_n \Sigma g_n^{-1}) = 1$$
for all $g_1,\ldots,g_n \in \Gamma$. In particular,
$$B \embed{M} A \rtimes (g_1 \Sigma g_1^{-1} \cap \cdots \cap g_n \Sigma g_n^{-1})$$
for all $g_1,\ldots,g_n \in \Gamma$.
\end{proof}

\vspace{2ex}

{\small\parbox[t]{140pt}{Cyril Houdayer \\ CNRS-ENS Lyon\\ UMPA UMR 5669\\ 69364 Lyon cedex 7\\ France\\
{\footnotesize cyril.houdayer@ens-lyon.fr}}\hspace{15pt}\parbox[t]{130pt}{Sorin Popa\\ Mathematics Department\\ UCLA\\ CA 90095-1555\\ United States\\
    {\footnotesize popa@math.ucla.edu}}\hspace{15pt}\parbox[t]{150pt}{Stefaan Vaes\\ Department of Mathematics\\
    K.U.Leuven, Celestijnenlaan 200B\\ B--3001 Leuven\\ Belgium
    \\ {\footnotesize stefaan.vaes@wis.kuleuven.be}}}

\end{document}